\theoremstyle{plain}
\newtheorem{theorem}{Theorem}[section]
\newtheorem{proposition}[theorem]{Proposition}
\theoremstyle{definition}
\newtheorem{definition}[theorem]{Definition}
\newtheorem{example}[theorem]{Example}
\theoremstyle{remark}
\newtheorem{remark}[theorem]{Remark}
\providecommand\sslash{\mathbin{/\mkern-5.5mu/}}
\begin{document}

\articletype{RESEARCH NOTE}
\title{Blues for Alice: The Interplay of Neo-Riemannian and Cadential Viewpoints}

\author{Octavio A. Agustín-Aquino}

\date{\today}


\maketitle

\begin{abstract}
We extend a property of Mazzola's theory of cadential sets in relation to the modulation between minor and major tonalities from triadic to tetradic harmony, using the PLRQ group of Cannas et al.\ (2017) as the analogue of the classical PLR group. While the PLR group connects triadic cadential sets via the relative morphism $R$, the tetradic case reveals a richer structure: two pairs of cadential sets connected by distinct morphisms forming a ``prism'' in the coslice category over the common chords of the paired cadential sets, and a single pair for those that allow quantized modulations. We demonstrate this structure through an analysis of Charlie Parker's \textit{Blues for Alice} (1951) and Ray Noble's \textit{Cherokee} (1938), showing how the prism morphism, PLRQ transformations and quantized modulations organize harmonic navigation in bebop. The categorical framework models a formal correlate of the transformational \textit{vécu} that musicians experience when navigating between cadential regions.
\end{abstract}

\begin{keywords}
Cadential sets, quantized modulations, PLRQ group, neo-Riemannian theory, seventh chords, jazz harmony, coslice category, Charlie Parker
\end{keywords}

\begin{classcode}
MSC20: 00A65, 18B99
\end{classcode}

\section{Introduction}

The problem of modulation between relative major and minor keys (such as C major and A minor) has occupied theorists since
long ago. For example, Kirnberger \citep[p. 108]{Kirnberger1774} suggests a progression $\mathrm{C}\to \mathrm{D} \to \mathrm{G} \to \mathrm{E}7 \to \mathrm{Am}$, while Rimsky-Korsakov \citep[p. 44]{RimskyKorsakov1910} uses $\mathrm{C} \to \mathrm{Dm}\to \mathrm{E}7 \to \mathrm{Am}$ to effect this modulation in a rather straightforward fashion. The principle seems to be rooted in the notion of standard cadence from the fifth degree towards the tonic.

Guerino Mazzola's theory of global tonality \citep{Mazzola2002}, on the other hand, introduced \emph{minimal cadential sets} or \emph{cadences}:  minimal collections of chords that uniquely identify a scale (or, more precisely, an interpretation of a scale), as the mechanism for asserting tonality. In order to define them, first let $E=\{x_{0},\ldots,x_{6}\}\subseteq \mathbb{Z}_{12}$ be a major scale, ordered by the circular order induced by octave equivalence. Its triadic and tetradic \emph{interpretations} are obtained by stacking generic thirds, namely:
\[
E^{(3)}=\{\{x_{i},x_{i+2\bmod{7}},x_{i+4\bmod{7}}\}:i\in\mathbb{Z}_{7}\},
\]
and
\[
E^{(4)}=\{\{x_{i},x_{i+2\bmod{7}},x_{i+4\bmod{7}},x_{i+6\bmod{7}}\}:i\in\mathbb{Z}_{7}\},
\]
respectively. The Roman numerals I, II, $\ldots$, VII denote\footnote{Since the scale is fixed in advance and chords are obtained by stacking thirds over a given scale degree, the Roman numeral denotes only the degree. The intervallic content of the resulting chord therefore depends on both the scale and on the number of stacked thirds, which makes this notation more natural for tetrads and higher structures. When useful for orientation, we indicate the corresponding traditional chord names in the text. We will also write specific degrees of the tonality $T$ as a subscript, e.g., $\text{I}_{T}$.} the starting scale degrees $x_{i}$.

For the major scale in the triadic interpretation, the cadences are:
\[
k_1 = \{\mathrm{II}, \mathrm{V}\}, k_2 = \{\mathrm{II}, \mathrm{III}\}, k_3 = \{\mathrm{III}, \mathrm{IV}\}, k_4 = \{\mathrm{IV}, \mathrm{V}\}, k_5 = \{\mathrm{VII}\}.
\]

It is important to remark that, throughout this paper, the term \emph{cadence} is used in Mazzola's technical sense of a cadential set: a minimal chordal signature that identifies an $n$-adic interpretation, i.e., the set of diatonic $n$-ads over a scale. It is not used in the common-practice sense of an ordered closing syntagm. In any case, the present analysis is not incompatible with syntactic jazz-harmonic analyses like those of ii-V motion, blues form, or turnarounds. Hence, once all the members of a cadence in Mazzola's sense have been displayed, the corresponding $n$-adic interpretation (i.e. the tonality in Mazzola's terminology) is identified. This does not mean that centricity or formal closure has been established in the common-practice syntagmatic sense.

Mazzola developed a so-called \emph{quantum theory of modulation} for the transit between one tonality and another, but it does not properly model the modulation between minor and major tonalities, particularly between a major tonality and its relative minor (for they are constructed over the same scale) thus rendering them as non-quantized. Also del Pozo and Gómez-Martin propose a modulation theory based on discrete optimization \citep{dPGM22}, but it is trivial for a major tonality and its relative minor, since their scales coincide.

In order to manage this modulation we resort to the PLR group\footnote{See \citep{Crans01062009} for a good introduction on this group.}, acting on the set of major and minor triads, for it provides morphisms connecting these cadential sets. In particular, the relative $R$ satisfies $R \circ T^5 = T^5 \circ R$ and $R \circ T^7 = T^7 \circ R$, yielding the commutative diagram
\begin{equation}\label{E:RforTriads}
\begin{tikzcd}
\mathrm{F} & \mathrm{C} \arrow[l,swap,"T^{5}"] \arrow[r,"T^{7}"] & \mathrm{G} \\
\mathrm{Dm} \arrow[<->,u, "R"] & \mathrm{Am} \arrow[l, "T^{5}"] \arrow[r,swap,"T^{7}"] \arrow[<->,u, "R"] & \mathrm{Em} \arrow[<->,u, "R"]
\end{tikzcd}
\end{equation}

Thus $R$ sends the cadence $k_4 = \{\mathrm{IV}, \mathrm{V}\}$ to $k_{2} = \{\mathrm{II}, \mathrm{III}\}$ (which lies within the relative minor), which is similar to Rimsky-Korsakov's solution. The relative also transforms $k_{1} = \{\mathrm{II}, \mathrm{V}\}$ into $k_{3} = \{\mathrm{III}, \mathrm{IV}\}$ and back, but only as sets, for there is no commutative diagram that clarifies the relation.
		
This paper extends this framework to \emph{tetradic} (seventh chord) harmony, where the relevant symmetry group is PLRQ \citep{Cannas2017}, and discovers a conglomerate structure connecting six cadential sets.

\section{Tetradic Cadential Sets}

Regarding the tetradic interpretation of the major scale, the cadential sets are \citep{AgustinAquino2020}:
\begin{multline*}
J_1 = \{\mathrm{I}^7, \mathrm{II}^7\},
J_2 = \{\mathrm{I}^7, \mathrm{IV}^7\},\\
J_3 = \{\mathrm{II}^7, \mathrm{III}^7\},
J_4 = \{\mathrm{III}^7, \mathrm{IV}^7\},\\
J_5 = \{\mathrm{V}^7\}, 
J_6 = \{\mathrm{VII}^7\}.
\end{multline*}

Note that we include the superscript to indicate that these belong to the tetradic interpretation of the major scale.

\begin{remark}
A fundamental difference from the triadic case is that the cadential sets $J_1$ and $J_2$ contain the tonic, and that there exist two singletons: $J_5$ and $J_6$. This asymmetry will structure our categorical analysis.
\end{remark}

\section{The PLRQ Group}

The PLRQ group \citep{Cannas2017} generalizes PLR to seventh chords. We will enlist the morphisms that we require from it, each with an illustrative example.

\begin{definition}[$R_{42}$]
The morphism $R_{42}$ exchanges major seventh (represented by the subscript $4$) and minor seventh chords (represented by the subscript $2$) whose roots are related by minor third:
\[
R_{42}: [\underline{x}, x+4, x+7, x+11] \leftrightarrow [x, x+4, x+7, \underline{x+9}].
\]
where the underline indicates the root.
\end{definition}

\begin{example}
With $x = 5$, in the scale of $C$:
\[
\mathrm{IV}^7 = [5,9,0,4] \mapsto [5,9,0,2] = [2,5,9,0] = \mathrm{II}^7.
\]
\end{example}

\begin{definition}[$L_{13}$]
The morphism $L_{13}$ exchanges dominant seventh (represented by the subscript $1$) and half-diminished seventh (represented by the subscript $3$) chords whose roots are related by a major third:
\[
L_{13}: [\underline{x}, x+4, x+7, x+10] \leftrightarrow [x+2, \underline{x+4}, x+7, x+10].
\]
\end{definition}

\begin{example}
With $x = 7$, in the scale of $C$:
\[
\mathrm{V}^7 = [7,11,2,5] \mapsto [9,11,2,5] = \mathrm{VII}^7.
\]
\end{example}

\begin{definition}[$P_{42}$]
The parallel transformation $P_{42}$ converts major seventh to minor seventh chords with the same root:
\[
P_{42}: [\underline{x}, x+4, x+7, x+11] \mapsto [\underline{x}, x+3, x+7, x+10].
\]
\end{definition}

Define
\[
 L_{42}: [\underline{x},x+4,x+7,x+11]\leftrightarrow [x+2,\underline{x+4},x+7,x+11].
\]

Then, in Cannas' notation we have
\[
 T^{8}\circ L_{42} = P_{42},
\]
for
\begin{align*}
T^{8}\circ L_{42}([\underline{x},x+4,x+7,x+11])&= T^{8}([\underline{x+4},x+7,x+11,x+2])\\
&=[\underline{x},x+3,x+7,x+10],
\end{align*}
as claimed.

\begin{example}
With $x=11$:
\[
 \mathrm{Bmaj}7 = [11,3,6,10] \mapsto [11,2,6,9] = \mathrm{Bm}7.
\]
\end{example}

\begin{remark}Notice that $P_{42}$ contains two familiar triadic transformations on the overlapping triads of a seventh chord. On the lower triad it restricts to the parallel transformation $P$, while on the upper triad it restricts to the Slide transformation S. For example, Bmaj7$\mapsto$ Bm7 sends B major to B minor in the lower triad, and sends E$\flat$ minor to D major in the upper triad.
\end{remark}

\section{The Coslice Category Construction}

We need to define the underlying category for the main constructions of
the paper. Let \(\mathcal X\) be the set of dominant, minor, half-diminished,
major, and diminished seventh chords in \(\mathbb Z/12\mathbb Z\). Let
$PLRQ$ be the PLRQ group of transformations of seventh
chords as described by \cite{Cannas2017}; its generators exchange specified
pairs of chord types and fix the remaining types. We set
\[
G=\langle PLRQ,T^{s}\mid s\in\mathbb Z_{12}\rangle,
\]
where \(T^s\) denotes transposition by \(s\). We take \(\mathcal C\) to be
the action groupoid \(\mathcal X\sslash G\). More explicitly, the objects of \(\mathcal C\)
are the chords in \(\mathcal X\), and a morphism \(X\to Y\) is a pair
\((g,X)\), where \(g\in G\) and \(g(X)=Y\). Composition is given by
\[
(h,g(X))\circ(g,X)=(hg,X)
\]
and, when no confusion is possible, we will write the morphism \((g,X)\) simply as
\[
g:X\to g(X).
\]

Thus transformations such as \(R_{42}\), \(L_{13}\), \(P_{42}\), and the
relevant transpositions determine morphisms between chord objects.

The pairs of cadential sets of interest to us have common members:
\(J_1\) and \(J_2\) share \(I^7\), while \(J_3\) and \(J_4\)
share \(III^7\). Hence, if we were to generalize \eqref{E:RforTriads}
to tetrads, the difficulty is that the transformations connecting
the non-common members do not generally stabilize these common members.
For instance, \(R_{42}\) sends \(IV^7\) to \(II^7\), but it does not fix
\(I^7\). Thus the relation between paired cadential sets is not best
described as a transformation of two-element sets fixing their
intersection. We resolve this using the \emph{coslice category} construction.

\begin{definition}
Given morphisms $f: x \to y$ and $g: x \to z$ in a category $\mathcal{C}$, the \emph{coslice} category $x\downarrow \mathcal{C}$ has as objects all morphisms with domain $x$, and a morphism from $f$ to $g$ is given by $h: y \to z$ such that $h \circ f = g$.
\end{definition}

Taking the common member of each paired cadential set as the distinguished object, we ask: do morphisms exist between the various seventh chords appearing in cadential sets?

\begin{proposition}
The following morphisms connect chords in the cadential sets:
\begin{enumerate}
\item $T^5(\mathrm{I}^7) = \mathrm{IV}^7$.
\item $T^{10}(\mathrm{III}^7) = \mathrm{II}^7$.
\item $R_{42}(\mathrm{IV}^7) = \mathrm{II}^7$ and $R_{42}(\mathrm{II}^{7})=\mathrm{IV}^{7}$.
\end{enumerate}
\label{P:Triangle}
\end{proposition}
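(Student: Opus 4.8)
The plan is to verify each item by direct computation in the pitch-class group $\mathbb{Z}_{12}$, after fixing once and for all the rooted representatives of the relevant degrees in the scale of $C$. Stacking thirds over each scale degree yields
\[
\mathrm{I}^7 = [\underline{0},4,7,11],\quad \mathrm{II}^7 = [\underline{2},5,9,0],\quad \mathrm{III}^7 = [\underline{4},7,11,2],\quad \mathrm{IV}^7 = [\underline{5},9,0,4],
\]
with the root underlined. Each transformation in play ($T^5$, $T^{10}$, $R_{42}$) is already available as a morphism, so the substance of the proposition is purely the identification of source and target chords; there is no existence question to settle, only arithmetic.

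For items (1) and (2) I would apply the transposition and reduce modulo $12$: $T^5[\underline{0},4,7,11] = [\underline{5},9,0,4]$ recovers $\mathrm{IV}^7$, and $T^{10}[\underline{4},7,11,2] = [\underline{14},17,21,12]\equiv[\underline{2},5,9,0]$ recovers $\mathrm{II}^7$. These carry no subtlety beyond the modular reduction and the convention that the cadential chords are compared as rooted pitch-class sets. For the forward half of (3), $R_{42}(\mathrm{IV}^7) = \mathrm{II}^7$ is precisely the worked example following the definition of $R_{42}$ (the case $x=5$), so it may simply be cited.

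The reverse half of (3) is where I expect the only genuine obstacle, since it requires matching $\mathrm{II}^7$ against the \emph{right-hand} template $[x, x+4, x+7, \underline{x+9}]$ of the $R_{42}$ definition, in which the root occupies the fourth slot rather than the first. I would solve $x+9 \equiv 2 \pmod{12}$, obtaining $x = 5$, and check that $[5,9,0,\underline{2}]$ coincides with $\mathrm{II}^7$ as a set. Applying $R_{42}$ then returns the left-hand template $[\underline{5},9,0,4] = \mathrm{IV}^7$, which confirms that $R_{42}$ acts as the claimed involution on this pair. The delicate point throughout will be the bookkeeping between a chord as an ordered rooted tuple (on which $R_{42}$ and $T^n$ literally act) and a chord as the unordered pitch-class set naming a cadential degree; I would make this identification explicit so that the two occurrences of ``$\mathrm{II}^7$'' and ``$\mathrm{IV}^7$'' are unambiguously the same object.
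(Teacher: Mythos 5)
Your proposal is correct and takes essentially the same route as the paper, which states this proposition without a separate proof and relies on exactly the direct computations in $\mathbb{Z}_{12}$ that you carry out: the forward half of item (3) is the paper's own worked example ($x=5$) following the definition of $R_{42}$, and the reverse half follows from the involutivity ($\leftrightarrow$) built into that definition, which your template-matching argument makes explicit. Nothing further is needed.
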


This yields commutative triangles in $\mathcal{X} \sslash G$ representing morphisms in the relevant coslice categories:
\[
\begin{tikzcd}
\mathrm{I}^7 \arrow[r, "T^5"] \arrow[dr, "R_{42}\circ T^5"'] & \mathrm{IV}^7 \arrow[d, "R_{42}"] \\
& \mathrm{II}^7
\end{tikzcd}
\qquad
\begin{tikzcd}
\mathrm{III}^7 \arrow[r, "T^{10}"] \arrow[dr, "R_{42}\circ T^{10}"'] & \mathrm{II}^7 \arrow[d, "R_{42}"] \\
& \mathrm{IV}^7
\end{tikzcd}
\]

In these coslice triangles, the object under which the coslice is taken is the common member of a pair of cadential sets.
Thus the triangle with distinguished vertex I$^{7}$ relates $J_{1}=\{\text{I}^{7},\text{II}^{7}\}$ and $J_{2}=\{\text{I}^{7},\text{IV}^{7}\}$. The outgoing arrows have the non-common members II$^{7}$ and IV$^{7}$ as codomains and $R_{42}$ connects them.
Similarly, the triangle with distinguished vertex III$^{7}$ relates $J_{3}=\{\text{III}^{7},\text{II}^{7}\}$ and $J_{4}=\{\text{III}^{7},\text{IV}^{7}\}$. Thus the morphisms in Proposition \ref{P:Triangle} make explicit the relations between pairs of cadences
by representing each cadence through the corresponding object of the appropriate coslice category.

\section{The ABC Conglomerate Structure}

\begin{theorem} There exists a commutative diagram that relates $(J_{1},J_{2})$ and $(J_{3},J_{4})$, as follows
\begin{equation}\label{E:Prism}
    \begin{tikzcd}[row sep=1.5em, column sep = 1.5em]
    \mathrm{IV}^{7} \arrow[rr,"R_{42}"] \arrow[dr, "R_{42}"] &&
    \mathrm{II}^{7} \arrow[dr,"R_{42}"] \\
    & \mathrm{II}^{7} \arrow[rr,pos=0.3,"R_{42}"] &&
    \mathrm{IV}^{7} \\
    \mathrm{I}^{7} \arrow[rr,"T^{7}\circ R_{42}"] \arrow[ur] \arrow[uu,"T^{5}"] && \mathrm{III}^{7} \arrow[ur] \arrow[uu,pos=0.2,"T^{10}"]
    \end{tikzcd}
\end{equation}
\end{theorem}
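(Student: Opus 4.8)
The plan is to prove the statement by verifying that the diagram \eqref{E:Prism} commutes, i.e.\ that any two directed paths sharing a source and a target compose to the same morphism. Because the two unlabeled slanted arrows are forced to be the composites $R_{42}\circ T^{5}\colon \mathrm{I}^{7}\to\mathrm{II}^{7}$ and $R_{42}\circ T^{10}\colon \mathrm{III}^{7}\to\mathrm{IV}^{7}$ appearing in the commutative triangles of the preceding Proposition, the left and right triangular faces of the prism commute by construction, and the whole content reduces to the two quadrilateral faces. First I would isolate the two structural facts about $R_{42}$ that drive everything: that $R_{42}$ is an involution, $R_{42}\circ R_{42}=\mathrm{id}$, and that $R_{42}$ is equivariant for transposition, $R_{42}\circ T^{k}=T^{k}\circ R_{42}$ for all $k$.

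The involution is immediate from the two-sided defining correspondence. For the equivariance I would track roots: $R_{42}$ sends the major seventh rooted at $x$ to the minor seventh rooted at $x+9$, and conversely the minor seventh rooted at $r$ to the major seventh rooted at $r+3$. In both directions the output root is the input root plus a fixed constant, so inserting $T^{k}$ before or after $R_{42}$ shifts that root by the same $k$; hence the two composites agree. (Equivalently, this is the familiar fact that the neo-Riemannian operations commute with transposition.) This equivariance is the one step that carries all the weight; the remainder is bookkeeping in $\mathbb{Z}_{12}$.

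With these in hand the faces are routine. The back quadrilateral $\mathrm{IV}^{7}\to\mathrm{II}^{7}\to\mathrm{IV}^{7}$, traversed through either copy of $\mathrm{II}^{7}$, is $R_{42}\circ R_{42}=\mathrm{id}_{\mathrm{IV}^{7}}$ along both routes. The front quadrilateral is the only genuine computation: one route yields $R_{42}\circ(R_{42}\circ T^{5})=T^{5}$, while the other yields
\[
(R_{42}\circ T^{10})\circ(T^{7}\circ R_{42})
=R_{42}\circ T^{17}\circ R_{42}
=R_{42}\circ T^{5}\circ R_{42}
=T^{5},
\]
using $T^{10}\circ T^{7}=T^{17}=T^{5}$ in $\mathbb{Z}_{12}$ together with the equivariance and the involution. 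Thus every pair of parallel paths agrees, and indeed each path from $\mathrm{I}^{7}$ to the terminal $\mathrm{IV}^{7}$ collapses to the single transposition $T^{5}$, consistent with $T^{5}(\mathrm{I}^{7})=\mathrm{IV}^{7}$; the prism commutes and the pairs $(J_{1},J_{2})$ and $(J_{3},J_{4})$ are related as claimed.
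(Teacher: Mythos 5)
Your proof is correct and takes essentially the same approach as the paper's, which likewise rests on the involutivity of $R_{42}$, the commutation of $R_{42}$ with transposition, and associativity. Your root-tracking argument for the general equivariance $R_{42}\circ T^{k}=T^{k}\circ R_{42}$ subsumes the paper's explicit pitch-class computation (done there only for $T^{10}$), and your face-by-face verification simply makes explicit the bookkeeping the paper compresses into ``it follows at once.''
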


\begin{proof}
We only have to prove that the square faces of the diagram commute. This follows at once from the involutivity of $R_{42}$, the fact that $T^{5}$ commutes with $R_{42}$
\begin{align*}
R_{42}\circ T^{5}([\underline{x},x+4,x+7,x+11]) &= R_{42}([\underline{x+5},x+9,x,x+4])\\
&=[x+5,x+9,x,\underline{x+2}]\\
&=T^{5}([x,x+4,x+7,\underline{x+9}])\\
&=T^{5}\circ R_{42}([\underline{x},x+4,x+7,x+11]),
\end{align*}
and associativity, for
\begin{align*}
 (R_{42}\circ T^{10})\circ (T^{7}\circ R_{42}) &= R_{42}\circ (T^{10}\circ T^{7})\circ R_{42}\\
 &=R_{42}\circ T^{5}\circ R_{42} = T^{5}\circ R_{42}\circ R_{42} = T_{5}.
\end{align*}
and
\[
T^{10}\circ (T^{7}\circ R_{42}) = (T^{10}\circ T^{7})\circ R_{42} = T^{5}\circ R_{42}.\qedhere
\]
\end{proof}

Now we see the six cadential sets organize as follows.

\begin{definition}[The ABC Conglomerate]
We name the three components of a conglomerate defined by the cadences after jazz standards that exemplify them:
\begin{enumerate}
\item[\textbf{A}] (Alice): $J_3 \leftrightarrow J_4$, connected by $R_{42}$.
\item[\textbf{B}] (Blues): $J_1 \leftrightarrow J_2$, connected by $R_{42}$.
\item[\textbf{C}] (Cherokee): $J_5 \leftrightarrow J_6$, connected by $L_{13}$.
\end{enumerate}
\end{definition}

\begin{remark}
Observe that the $R_{42}$-edges connect the non-common members of these cadential sets inside the commutative diagram \eqref{E:Prism}, while the transpositions $T^{5}$ and $T^{10}$ and the appropriate compositions supply the remaining sides of the corresponding triangles.
\end{remark}

\begin{figure}[h]
\centering
\begin{tikzpicture}[scale=1.5]
\node (J3) at (0, 2.5) {$J_3 = \{\mathrm{II}^7, \mathrm{III}^7\}$};
\node (J4) at (3, 2.5) {$J_4 = \{\mathrm{III}^7, \mathrm{IV}^7\}$};
\draw[<->] (J3) -- node[above] {$R_{42}$} (J4);
\node at (0, 3) {\textbf{A (Alice)}};

\node (J1) at (0, 1.2) {$J_1 = \{\mathrm{I}^7, \mathrm{II}^7\}$};
\node (J2) at (3, 1.2) {$J_2 = \{\mathrm{I}^7, \mathrm{IV}^7\}$};
\draw[<->] (J1) -- node[above] {$R_{42}$} (J2);
\node at (0, 0.75) {\textbf{B (Blues)}};

\node (J5) at (0, 0) {$J_5 = \{\mathrm{V}^7\}$};
\node (J6) at (3, 0) {$J_6 = \{\mathrm{VII}^7\}$};
\draw[<->] (J5) -- node[above] {$L_{13}$} (J6);
\node at (0, -0.5) {\textbf{C (Cherokee)}};

\draw[dashed] (J1) -- (J3);
\draw[dashed] (J2) -- (J4);
\end{tikzpicture}
\caption{The ABC conglomerate structure of tetradic cadential sets. The dashed lines summarize the morphisms from \eqref{E:Prism}.}
\label{fig:conglomerate}
\end{figure}
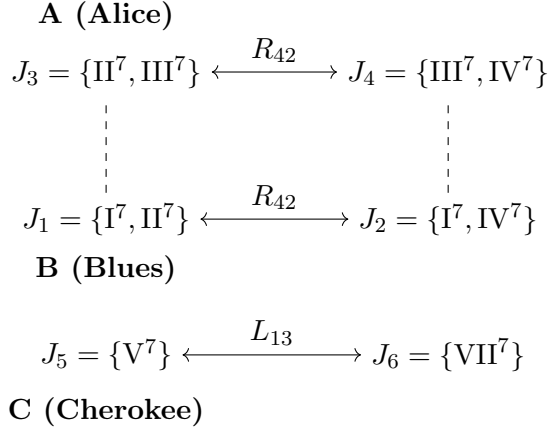

\begin{remark}[Phenomenological interpretation]
The regions have distinct musical characters:
\begin{itemize}
\item \textbf{B} (containing $\mathrm{I}^7$): The ``blues home'' region where tonality is firmly established.
\item \textbf{A} (containing $\mathrm{II}^7$, $\mathrm{III}^7$ but not $\mathrm{I}^7$): Relatively unstable region.
\item \textbf{C} ($\mathrm{V}^7 \leftrightarrow \mathrm{VII}^7$): Tension region, furthest from tonic stability.
\end{itemize}
\end{remark}

\section{Analysis: \textit{Blues for Alice}}

Charlie Parker's \textit{Blues for Alice} (1951) \citep{RealBook} manifests the prism structure explicitly. The chord progression is:
\begin{gather*}
\boxed{\text{F6}} \mid \text{Em7 A7$\flat$9} \mid \text{Dm7} \boxed{\text{G7}} \mid \\
\text{Cm7} \boxed{\text{F7}} \mid \boxed{\text{B$\flat$7}} \mid \boxed{\text{B$\flat$m7}} \text{E$\flat$7} \mid \\
\boxed{\text{Am7}} \text{D7} \mid \text{A$\flat$m7 D$\flat$7} \mid \boxed{\text{Gm7}} \mid \\
\text{C7} \mid \boxed{\text{Am7}} \mid \text{Dm7} \mid \boxed{\text{Gm7}} \text{C7} \mid
\end{gather*}

Before we proceed to the harmonic analysis, it is important to remark that the tetradic interpretation
and cadence model entail a rigidity that, nevertheless, allows us to understand the overall structure
of the tonality phenomenon. Yet, when it comes to analysis and performance, and particularly in jazz, there
is certain flexibility in the interpretation of the chords. Here, the piece opens with F6, but it is clearly
signaling the departing tonic chord. As Mark Levine writes, for instance: ``A C major chord can be notated
as Cmaj7, CM7, C6, C$\substack{9\\6}$, or C$\Delta$, and they all mean pretty much the same thing'' \citep[p. ix]{mL95}.
Teriete \citep[p. 278]{pT20} states, which is quite to the point in our case: ``›Lead Sheets‹ sind so spezifisch wie nötig, aber so abstrakt wie möglich.''\footnote{Lead sheets are as specific as necessary, but as abstract as possible.}

For the purposes of the following analysis, ``articulates'' means that the lead-sheet harmony projects the corresponding root-degree skeleton of the cadential set, possibly through standard jazz procedures such as sixth chords, dominantization, modal mixture, or local ii–V syntax; literal diatonic tetrads will be identified when they occur.

Now we can see that the harmonic structure of \textit{Blues for Alice} traces a path through the BA prism:
\begin{enumerate}
\item Measures 1--3 articulate $J_1 = \{\mathrm{I}^7, \mathrm{II}^7\}$ with continuous melody; F6 realizes the tonic
member, while the G-root sonority in the local ii-V chain projects the II degree in altered dominant form.
\item Measures 4-5 articulate $J_2 = \{\mathrm{I}^7, \mathrm{IV}^7\}$ while the first melodic silence occurs; F7 and
B$\flat$7 project the skeletons of I and IV degrees, dominantized by blues syntax.
\item Measures 6-7 supply B$\flat$m7 as an altered IV degree and Am7 gives the literal III$^{7}$, articulating $J_{4}=\{\text{III}^{7},\text{IV}^{7}\}$.
\item Measures 9-12 display $J_{3}=\{\text{II}^{7},\text{III}^{7}\}$ literally through Gm7 and Am7, embedded in the final turnaround.
\end{enumerate}

The passage through the prism is not equally transparent at every stage. In the opening measures, $J_{1}$ and $J_{2}$ are
articulated via lead-sheet realization, blues dominantization, and local harmonic syntax rather than by bare exhibition
of the diatonic tetrads. The trajectory becomes progressively clearer as the piece approaches the Alice side of the
prism: $J_{4}$ is articulated with an explicit III$^{7}$ and an altered sonority on degree IV, and $J_{3}$ is finally literal with respect to the occurrence of its two members.

\begin{remark}[Phenomenological observation]
In measures 1--3, the melody is comparatively continuous. After measure 3, it becomes more angular and discontinuous, producing an affective contrast roughly analogous to a change of thematic character.

The perceived change of melodic character is reflected in the intervallic profile of the melody. Encoding pitches by semitone distance from middle C, ignoring repetitions and splitting the head at the first rest, the mean absolute melodic interval increases from $3.2500$ to $4.7143$, while the root-mean-square interval increases from $3.5824$ to $5.4423$. Thus the continuation after the first silence is measurably less conjunct and more intervallically mobile than the opening segment.

Thus the title motivates the mnemonic names $B$ (Blues) and $A$ (Alice): the analysis reads the tune as a passage from the blues-home region toward the Alice region.
\end{remark}

\section{Analysis: Cherokee and Quantized Modulation}

Ray Noble's \textit{Cherokee} (1938) \citep{RealBook} demonstrates two types of modulation.

\subsection{Section A: Quantized modulations}

The very opening measures feature a modulation from B$\flat$ to E$\flat$:
\[
\text{B$\flat$maj7} \to \text{F}^{+}7 \to \text{Fm7} \to \text{B$\flat$7} \to \text{E$\flat$maj7} \to \text{A$\flat$9} \to \text{B$\flat$6}
\]

The tonality of B$\flat$ is established with cadential set $J_5 = \{\mathrm{V}^7\}$ (represented by F$^+$7)\footnote{Strictly speaking F$^{+}$7 is not V$^{7}$, but an altered dominant realization of it: it preserves the root, third, and seventh of F7, replacing the fifth by an augmented fifth.}. The modulation to E$\flat$ uses pivot chords $\mathrm{II}^7$ (Fm7) and $\mathrm{VII}^7$; here $\mathrm{VII}_{\text{E}\flat}^{7}=\text{Dm}7\flat 5=\{\text{D},\text{F},\text{A}\flat,\text{C}\}$, which is a subset\footnote{It is important to stress that we do not use arbitrary diatonic supersets, for it would render the very concept of cadential set moot. A cadential member may be realized by inclusion only when the containing set is generated by a local harmonic complex: typically one lead-sheet chord, or two adjacent chords forming a recognized unit. Thus inclusion is constrained by locality and diagnostic minimality.} of A$\flat$9$\cup$B$\flat$6$=\{\text{A}\flat,\text{C},\text{E}\flat,\text{G}\flat,\text{B}\flat,\text{D},\text{F},\text{G}\}$. This is \emph{quantized} because full cadential sets establish each tonality with the corresponding pivots \citep{AgustinAquino2020}.

The modulation back from E$\flat$ to B$\flat$ is as follows:
\[
\text{Dm}7 \to \text{C}7 \to \text{Cm}7 \to \text{Dm}7 \to \text{G}7\flat 9 \to \text{Cm}7 \to \text{F}^{+}7 \to \text{B$\flat$maj}7.
\]

Here the pivots are $\mathrm{III}^{7}$ (Dm7) and $\mathrm{V}^{7}$ (represented by F$^+$7); see \citep[Table 1]{AgustinAquino2020}. It is remarkable that these correspond to the only quantized modulations available for two tonalities that are separated by a fifth (the only other case with a similar property corresponds to tonalities separated by one major third).

\subsection{Bridge: Non-quantized modulations via $P_{42}$}

Here we have the famous bridge
\begin{multline*}
\text{C$\sharp$m7} \to \text{F$\sharp$7} \to \text{Bmaj7} \mid \\
\text{Bm7} \to \text{E7} \to \text{Amaj7} \mid \\
\text{Am7} \to \text{D7} \to \text{Gmaj7} \mid \cdots.
\end{multline*}

These modulations are \emph{not} quantized, but the $P_{42}$ morphism serves as a bridge
\[
P_{42}(\mathrm{I}^7_B) = \text{Bm7} = \mathrm{II}^7_A,
\]
for the set $\{\mathrm{II}^{7},\mathrm{V}^{7}\}$ is cadential, but not minimal (i.e., $J_{5}=\{\mathrm{V}^{7}\}$ suffices). Here the subscripts A and B denote the corresponding keys, not the regions of the conglomerate.

\begin{proposition}
The parallel transformation $P_{42}$ converts the tonic of one key $K$ into the supertonic of the key $K-2$ a whole step below
\[
P_{42}: \mathrm{I}^7_K \mapsto \mathrm{II}^7_{K-2}
\]
which enables rapid whole-step descent established through the cadential set
\[
 \{\mathrm{II}^{7}_{K-2},\mathrm{V}^{7}_{K-2}\}
\]
or the cadence
\[
 \{\mathrm{I}^{7}_{K-2},\mathrm{II}^{7}_{K-2}\}.
\]
\end{proposition}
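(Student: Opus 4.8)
The plan is to split the statement into two parts: first the transformational identity $P_{42}\colon \mathrm{I}^7_K \mapsto \mathrm{II}^7_{K-2}$, which reduces to a direct pitch-class computation, and then the two cadential claims, which follow from the classification of cadential sets recalled in Section~2. First I would write the tonic seventh chord of $K$ explicitly. Since $\mathrm{I}^7_K$ is obtained by stacking thirds on the first degree of the major scale of $K$, it is the major seventh chord $[\underline{K}, K+4, K+7, K+11]$, so the definition of $P_{42}$ applies and yields $P_{42}(\mathrm{I}^7_K) = [\underline{K}, K+3, K+7, K+10]$, a minor seventh chord rooted at $K$.

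The second step is to identify this image as a degree in the scale of $K-2$. The supertonic of the key $K-2$ sits at pitch class $(K-2)+2 = K$, and stacking thirds on the second degree of a major scale yields a minor seventh chord; hence $\mathrm{II}^7_{K-2} = [\underline{K}, K+3, K+7, K+10]$, which coincides exactly with $P_{42}(\mathrm{I}^7_K)$. This settles the first claim, and the worked identity $P_{42}(\mathrm{I}^7_B) = \text{Bm7} = \mathrm{II}^7_A$ from the bridge analysis serves as a sanity check.

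For the cadential claims I would invoke the list in Section~2. The set $\{\mathrm{I}^7_{K-2}, \mathrm{II}^7_{K-2}\}$ is precisely $J_1$ relative to the key $K-2$, which already appears among the minimal cadential sets, so it is a cadence. For $\{\mathrm{II}^7_{K-2}, \mathrm{V}^7_{K-2}\}$, I would observe that it contains $J_5 = \{\mathrm{V}^7_{K-2}\}$; since a superset of a chord collection that uniquely determines the scale still uniquely determines it, $\{\mathrm{II}^7_{K-2}, \mathrm{V}^7_{K-2}\}$ is cadential though non-minimal, exactly as remarked in the Cherokee analysis.

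The computations are routine; the only real point of care---and thus the main obstacle---is the degree/pitch-class bookkeeping, namely verifying that the root of the image chord genuinely lands on the \emph{second} degree of $K-2$ rather than some neighbouring degree, and confirming that the stacking-of-thirds convention produces a minor seventh (and not, say, a half-diminished) chord on degree $\mathrm{II}$, so that the chord qualities on both sides match. Once the indexing is pinned down, the identification of $P_{42}(\mathrm{I}^7_K)$ with $\mathrm{II}^7_{K-2}$ is immediate.
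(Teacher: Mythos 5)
Your proof is correct and takes the same route the paper itself relies on: the paper states this proposition without a formal proof, leaning on the definition of $P_{42}$, the worked instance $P_{42}(\mathrm{I}^7_B) = \mathrm{Bm7} = \mathrm{II}^7_A$ from the \textit{Cherokee} bridge, and the remark that $\{\mathrm{II}^{7},\mathrm{V}^{7}\}$ is cadential but not minimal because $J_5 = \{\mathrm{V}^7\}$ suffices. Your write-up simply makes that implicit argument explicit and general---the pitch-class computation $[\underline{K},K+3,K+7,K+10] = \mathrm{II}^7_{K-2}$, the identification of $\{\mathrm{I}^7_{K-2},\mathrm{II}^7_{K-2}\}$ with $J_1$, and the superset-of-$J_5$ observation---so it is a faithful completion of the paper's reasoning rather than a different approach.
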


\begin{remark} The $P_{42}$ morphism \emph{compensates for the absence of quantization}, providing a bridge that mediates in the display of cadential sets. Furthermore, it would be possible to make a \emph{faster} \textit{Cherokee}-like modulation
\[
\text{C$\sharp$m7} \to \text{Bmaj7} \mid \\
\text{Bm7} \to \text{Amaj7} \mid \\
\text{Am7} \to \text{Gmaj7} \mid \cdots
\]
which could be seen as a \emph{descending} chaining. In this schematic variant the dominant harmonies are suppressed; the successive keys are inferred from the displayed $J_{1}$-cadences.
\end{remark}

\section{Comparison with Syntactic Approaches}

Pachet \citep{Pachet1997Solar} analyzed \textit{Blues for Alice} using a syntactic/AI approach with ontological ``shapes''. His system produces parse trees. In a subsequent work, Rousseaux and Pachet \citep{RosseauxPachet1998} acknowledge a fundamental limitation: they cannot capture the \textit{vécu} (lived experience) of transitioning between \textit{régions de complexité}.

Pachet’s analysis is useful precisely because it is successful at the syntactic level. His grammar parses a chord sequence by means of hierarchical objects such as TwoFive, ResolvingSeventh, ChordSubstitution, and BluesShape. Thus it explains how local harmonic objects are combined into a larger blues form. The present approach asks a complementary question: once such syntactic objects have been recognized, which cadential regions are being traversed, and which morphisms relate them? The contrast is summarized in Table 1.

\begin{table}[h]
\centering
\begin{tabular}{ll}
\hline
\textbf{Syntactic Approach} & \textbf{Categorical Approach} \\
\hline
Detects patterns & Identifies transformations \\
Syntagmatic (sequential) & Paradigmatic (sets) \\
TwoFive, Turnaround & $J_1, J_2, J_3, \ldots$ \\
ChordSubstitution (procedure) & $R_{42}$ (morphism) \\
Tree of shapes & Conglomerate with morphisms \\
\hline
\end{tabular}
\caption{Comparison of syntactic and categorical approaches.}
\label{tab:comparison}
\end{table}


This comparison should not be read as a rejection of Pachet’s syntactic analysis, but as an overlay of two perspectives. The syntactic layer describes sequential construction; the categorical layer describes paradigmatic relations between cadential sets. In this sense, the morphism $R_{42}$ offers a formal correlate of the \textit{vécu} of the transition between regions of complexity discussed by Rousseaux and Pachet. Musicians may describe this transition informally as “going to the relative”; the categorical apparatus identifies the corresponding morphism.

\section{Stylistic Signatures in the Conglomerate}

The conglomerate structure illuminates how different jazz styles inhabit different regions of cadential space. Each style can be characterized by which pairs of cadential sets it primarily exploits.

\begin{itemize}
\item \textbf{Standard bebop blues}: Primarily navigates within the Blues triangle ($J_1 \leftrightarrow J_2$).
\item \textbf{Parker's innovations}: Traverses the full prism, including the Alice triangle.
\item \textbf{Cherokee's uniqueness}: Exploits the C edge ($J_5 \leftrightarrow J_6$) for quick quantized modulations.
\end{itemize}

This explains Parker's own description of his breakthrough while practicing \textit{Cherokee} \citep{Giddins1987}:
\begin{quote}
 ``Now I'd been getting bored with the stereotyped changes that were being used at the time, and I kept thinking there's bound to be something else. I could hear it sometimes but I couldn't play it. Well, that night I was working over `Cherokee,' and as I did I found that by using the higher intervals of a chord as a melody line and backing them with appropriately related changes, I could play the thing I'd been hearing. I came alive.''
\end{quote}

In the present model, Parker's description can be read as a discovery of harmonic paths that traverse the conglomerate beyond the conventional bebop-blues region.

\section{Conclusion}

The extension of Mazzola's cadential set theory to tetradic harmony via the PLRQ group reveals a conglomerate that organizes the six cadential sets available. This structure:

\begin{enumerate}
\item \textbf{Unifies} the Mazzola (global tonality) and neo-Riemannian (local voice-leading) traditions through the coslice category construction.

\item \textbf{Explains} the phenomenological difference between cadential sets through categorical morphisms.

\item \textbf{Models} a formal correlate of what syntactic approaches leave implicit: the transformational experience musicians have when navigating harmonic space.

\item \textbf{Illuminates} Parker's bebop innovations as an exploration of the full prism.

\item \textbf{Distinguishes} quantized modulation (full cadential establishment) from non-quantized modulation ($R_{42}$ or $P_{42}$ as direct bridges).
\end{enumerate}

The naming convention: Alice ($J_3 \leftrightarrow J_4$), Blues ($J_1 \leftrightarrow J_2$), Cherokee ($J_5 \leftrightarrow J_6$), anchors the abstract structure in the jazz repertoire that first revealed it.

\appendices
\section{Source Code}

The following code in Octave 9.2.0 (using the \texttt{signal} package) was used to calculate the mean absolute melodic intervals and the root-mean-square intervals of the opening segment (\texttt{blues}) and the continuation
after the first rest (\texttt{alice}).

\begin{verbatim}
pkg load signal
blues=[17 12 9 16 12 9 14 16 11 14 13 10 7 9 5 2 7 9 5 4 3 7 10 14 13];
alice=[5 7 5 12 10 5 8 -2 7 15 13 10 8 12 5 7 9 4 0 2 13 11 3 10];
alice=[alice 8 7 17 14 10 2 9 7 12 10 15 12 9 4 7 14 10 2 9];
mean(abs(diff(blues)))
rms(diff(blues))
mean(abs(diff(alice)))
rms(diff(alice))
\end{verbatim}

\bibliographystyle{tMAM}
\bibliography{blues_for_alice}

\end{document}